\newcommand{\norm}[1]{\Vert#1\Vert}
\newcommand{\abs}[1]{\left\vert#1\right\vert}
\newcommand{\sog}[1]{\left\{#1\right\}}
\newcommand{\sko}[1]{\left(#1\right)}
\newcommand{\bra}[1]{\left[#1\right]}
\newcommand{\ip}[1]{\left\langle#1\right\rangle}
\newcommand{\eref}[1]{(\ref{#1})}
\newcommand{\Ds}{\displaystyle}
\newcommand{\be}{\begin{equation}}
\newcommand{\ee}{\end{equation}}
\newcommand{\Bth}{\begin{theorem}}
\newcommand{\Eth}{\end{theorem}}
\newcommand{\Bco}{\begin{cor}}
\newcommand{\Eco}{\end{cor}}
\newcommand{\Ble}{\begin{lem}}
\newcommand{\Ele}{\end{lem}}
\newcommand{\Bex}{\begin{exrm}}
\newcommand{\Eex}{\end{exrm}}
\newcommand{\Bde}{\begin{defin}}
\newcommand{\Ede}{\end{defin}}
\newcommand{\Bre}{\begin{rem}}
\newcommand{\Ere}{\end{rem}}
\newcommand{\Bpr}{\begin{prop}}
\newcommand{\Epr}{\end{prop}}
\newcommand{\bez}{\begin{eqnarray*}}
\newcommand{\eez}{\end{eqnarray*}}
\newcommand{\bc}{\begin{center}}
\newcommand{\ec}{\end{center}}
\newcommand{\ra}{\rightarrow}
\newcommand{\half}{\frac{1}{2}}
\newcommand{\pole}{{\bf K}}
\newcommand{\D}{\,{\mathrm d}}
\newcommand{\mr}{{\bf R}}
\newcommand{\mc}{{\bf C}}
\newcommand{\mh}{{\bf H}}
\newcommand{\lmpk}{\ell_{2;\pole}^m\ra\ell_{p;\pole}^n}
\newcommand{\sfer}{{\bf S}(m,\pole)}
\newcommand{\sferk}[1]{{\bf S}(#1,\pole)}
\newcommand{\sferr}[1]{{\bf S}(#1,\mr)}
\newcommand{\sferc}[1]{{\bf S}(#1,\mc)}
\newcommand{\sferh}[1]{{\bf S}(#1,\mh)}
\newcommand{\mpi}{\mathbf{P}}
\newcommand{\km}{\pole^m}
\newcommand{\kpm}{\pole \mpi^{m-1}}
\newcommand{\nq}{\norm{x}}
\newcommand{\ch}{\delta}%field characteristic
\newcommand{\vr}{\varrho}
\newcommand{\nkmp}{N_{\pole}(m,p)}
\newcommand{\uk}{\mathrm{U}(\pole)}
\begin{document}
\theoremstyle{plain}
\newtheorem{theorem}{\sc THEOREM}[section]
\newtheorem{cor}[theorem]{\sc COROLLARY}
\newtheorem{lem}[theorem]{\sc LEMMA}
\newtheorem{defin}[theorem]{\sc DEFINITION}
\newtheorem{prop}[theorem]{\sc PROPOSITION}
\newtheorem{conj}[theorem]{\sc CONJECTURE}
\newtheorem{remark}[theorem]{\sc REMARK}
\newtheorem{ex}[theorem]{\sc EXAMPLE}

\theoremstyle{remark}
\newtheorem{exrm}[theorem]{\sc EXAMPLE}
\newtheorem{rem}[theorem]{\sc REMARK}

\noindent
\parskip=10pt plus 4pt minus 2pt
\baselineskip=15pt ~\\~\\~\\
\begin{flushleft}
{\Large A recursive  construction of projective cubature formulas and related
isometric embeddings~\\~\\  }
{\large YURI I. LYUBICH\footnote{Corresponding author} and OKSANA A. SHATALOVA}\\

{\small\em
Department of Mathematics, Technion, Haifa 32000, Israel\\
e-mail: lyubich@tx.technion.ac.il}\\

{\small\em
Department of Mathematics, Texas A\&M University, College Station, TX 77843, USA\\
e-mail: shatalov@math.tamu.edu} \\
\end{flushleft}
{\footnotesize{\bf Abstract.} A recursive construction is presented for the projective
cubature formulas of index $p$ on the unit spheres
$\sfer\subset\pole^m$ where $\pole$ is $\mr$ or $\mc$, or $\mh$.
This yields a lot of new upper bounds for the minimal number of nodes $n=\nkmp$
in such formulas
or, equivalently, for the minimal $n$ such that there exists an isometric embedding $\lmpk$.

\begin{flushleft}
\footnotesize{\bf 2000 Mathematics Subject Classification:} 46B04, 65D32.\\
{\bf Key words:} cubature formula, Gauss-Jacobi quadrature, isometric embedding \\
\end{flushleft}

\pagestyle{myheadings}
 \markboth{\sc YURI I.LYUBICH AND OKSANA A.SHATALOVA}{}
 \markright{\sc Recursion Construction Of Projective Cubature Formulas}

\section{Introduction and overview}
\label{sec:intro}
\newcommand{\eqsepsp}{\;}
\newcommand{\eqsp}{\quad}
\newcommand{\homopk}{\ensuremath{{\Phi}_{\pole}(m,p)}\xspace}
\newcommand{\homopr}{\ensuremath{{\Phi}_{\mr}(m,p)}\xspace}
\newcommand{\homopc}{\ensuremath{\Phi_{\mc}(m,p)}\xspace}
\newcommand{\homoph}{\ensuremath{\Phi_{\mh}(m,p)}\xspace}

\newcommand{\homok}{\ensuremath{\Phi_{\pole}(m-1,p)}\xspace}
\newcommand{\homor}{\ensuremath{\Phi_{\mr}(m-1,p)}\xspace}
\newcommand{\homorch}[2]{\ensuremath{\Phi_{\mr}(#1,#2)}\xspace}
\newcommand{\homoc}{\ensuremath{\Phi_{\mc}(m-1,p)}\xspace}
\newcommand{\homoh}{\ensuremath{\Phi_{\mh}(m-1,p)}\xspace}
\newcommand{\nmd}{N_{\pole}(m,p)}

 \newcommand{\ps}{\pole\mathrm{P}^{m-1}}

Let $\pole$ be one of three classical fields: $\mr$ (real), $\mc$ (complex), $\mh$ (quaternion).
Its real dimension is
\be\ch=\ch(\pole)=\left\{\begin{array}{ll}1
&(\pole=\mr)\\2&(\pole=\mc)\\4&(\pole=\mh).
\end{array}\right.
\ee
We consider the right $\pole$-linear space $\pole^m$ consisting of the columns
$x=\bra{\xi_i}_1^m$, $\xi_i\in\pole$, $1\leq i\leq m$. This becomes an
{\em Euclidean space} being provided with the inner product
$$\ip{x,y}=\sum_{i=1}^m\bar{\xi}_i\eta_i,\quad x=\bra{\xi_i}_1^m, ~~y=\bra{\eta_i}_1^m,$$
where the bar means the standard conjugation in $\pole$. Obviously,
$$\ip{y,x}=\overline{\ip{x,y}},\qquad\ip{x\alpha,y\beta}=\overline{\alpha}\ip{x,y}\beta.$$
The corresponding {\em Euclidean norm} is the case $p=2$ in the family
$$\norm{x}_p=(\sum_{i=1}^m\abs{\xi_i}^p)^{1/p},\eqsp 1\leq p\leq\infty .$$
With the latter the space  $\pole^m$  is denoted by $l_{p;\pole}^m$, so
the Euclidean space $\pole^m$ is just $l_{2;\pole}^m$. In this case we
will omit the subindex 2 in the notation of the norm.

In $l_{2;\pole}^m$ the unit sphere is
$$ \sfer=\sog{x\in
\pole^m: \nq=1},\quad
\norm{x}=\sqrt{\ip{x,x}}=\Ds{\sqrt{\sum_{i=1}^m\abs{\xi_i}^2}}. $$
Since ${\bf S}(m,\pole)\equiv {\bf S}(\ch m, \mr)$, the topological dimension of
${\bf S}(m,\pole)$ is equal to $\delta m -1$.
In particular, $\sferk{1}=\uk\equiv\sog{\alpha\in\pole,~\abs{\alpha}=1}.$
This is a  multiplicative group acting as $x\mapsto x\alpha$ on $\sfer$. The corresponding
quotient space is the projective space $\kpm$. Its topological dimension is equal to $\ch(m-1)$.
The space $\pole\mathbf{P}^0$ is a singleton.
\Bde \emph{\cite{LSsp}}
Let $p$ be an integer even, $p\geq 2.$
A function $\phi:\pole^m\ra\mc$ belongs to the \emph{class} $\homopk$ if
  \renewcommand{\labelenumi}{\alph{enumi})}
  \begin{description}
  \item[a)] $\phi$ is a homogeneous polynomial of degree $p$ on the real space
$\mr^{\delta m}\equiv(\pole^m)_{\mr}$
      \end{description}
      and
       \begin{description}
  \item[b)] $\phi$ is $\uk$-invariant in the  sense
    that $\phi(x\alpha)=\phi(x)$, $x\in\pole^m,$ $\abs{\alpha}=1,$ or equivalently,
\begin{equation*}
      \phi(x\alpha) = \phi(x)\abs{\alpha}^p,\eqsp x\in\pole^m,\eqsp \alpha\in\pole.
\end{equation*}
  \end{description}
\Ede
As a result, the restriction $\phi|\sfer$ is well defined on $\kpm$. Accordingly,
it is called a {\em polynomial function} on $\kpm$ \cite{LSpf}. For simplicity
we preserve the notation $\phi$ for the projective image of $\phi\in\homopk $. This is
acceptable since the {\em projectivization} is one-to-one.

The simplest example of $\phi\in\homopk$ is $\phi(x)=\norm{x}^p.$
Every $\uk$-invariant (thus even) polynomial $\psi$ of degree $\leq p$ can be
included into $\homopk$ by multiplying each of its
homogeneous component $\psi_d$ by $\norm{\cdot}^{p-d}$, $d=\deg\psi_d=0,2\ldots,p-2,p.$
Since his transformation does not change the restriction $\psi|\sfer$, we have the inclusions
\be
\label{eq:pin}
\Phi_{\pole}(m, d)|\sfer\subset\Phi_{\pole}(m, p)|\sfer \eqsp (d=0,2\ldots,p-2).
\ee

For $\pole =\mr$ the $U(\pole)$-invariance reduces to the {\em central symmetry}, $\phi(-x)=\phi(x)$,
since $U(\mr)={\bf Z_2}$. On the other hand, ${\bf Z_2}\subset U(\pole)$, hence
\be
\label{eq:rin}
\Phi_\pole(m,p)\subset \Phi_{\mr}(\ch m,p).
\ee

Obviously, \homopk is a finite-dimensional complex linear
space. For $\pole=\mr$ this space
consists of all complex-valued homogeneous polynomials of degree $p$ on~$\mr^m$. The
monomials
\begin{equation*}
  \xi_1^{i_1}\ldots\xi_m^{i_m},\quad
  (\xi_k)_1^m\subset\mr^m,
\end{equation*}
with $i_1+\ldots i_m=p$ form a basis of $\homopr$. Accordingly,
\begin{equation}
\label{eq:R}
\dim \homopr= {{m+p-1}\choose{m-1}}.
\end{equation}

In the space \homopc a natural basis consists of all monomials
\begin{equation*}
   \xi_1^{i_1}\cdots\xi_m^{i_m}\bar\xi_1^{j_1}\cdots\bar\xi_m^{j_m},
  \quad \sko{\xi_k}_1^m\subset\mc^m,
\end{equation*}
where $(i_1,\ldots,i_m)$ and $(j_1,\ldots,j_m)$
independently run over all nonnegative $m$-tuples such that
$i_1+\cdots+i_m = j_1+\cdots+j_m=p/2$. Thus, the space $\homopc$
coincides with that of \cite{K}. We have
\begin{equation}
\label{eq:C}\dim \homopc= {{m+p/2-1}\choose{m-1}}^2.
\end{equation}

The structure of \homoph is much more complicated because
of the non-commutativity of the field \mh. The point is that the quaternion monomials
are not $\mathrm{U}(\mh)$-invariant, in general.
However, there exists an alternative way to calculate $\dim\homopk$ for all fields
$\pole$ at once, see \cite{LSsp}. In particular,
\begin{equation}
\label{eq:H}
\dim \homoph= \frac{1}{2m-1}{{2m+p/2-2}\choose{2m-2}}{{2m+p/2-1}\choose{2m-2}}.
\end{equation}

\Bde [ \cite{LSsp}, \cite{LSpf}]
A \emph{projective cubature formula of index} $p$ in $\kpm$ is an identity
\be\label{idodin}
\int_{\sfer}\!\!\!\phi\,\mathrm{d}\sigma_{\delta m-1}=
\sum_{k=1}^n\phi(x_k)\rho_k,~\phi\in\homopk,
\ee
where $\sigma_{\delta m-1}$ is the normalized measure on $\sfer$ induced by the volume in
$\mr^{\delta m}$, the \emph{nodes} $x_k\in\kpm$, all \emph{weights}  $\rho_k>0$ and
their sum is equal to 1.
\Ede

In an equivalent setting all $x_k\in\sfer$ and $x_i\neq x_k\alpha$ for
$\alpha\in U(\pole)$ and $i\neq k$. In this sense $x_k$ are pairwise
\emph{projectively distinct.}

For $\pole=\mr$ the identity \eref{idodin} is a {\em spherical cubature formula of
index} $p$ \cite{GS}, \cite{LV}. In the case of equal weights the set of nodes
of a spherical cubature formula is a \emph{spherical design} \cite{DGS} of the same index.
Similarly, a {\em projective design} over any field $\pole$ can be defined
as the set of nodes of a projective cubature formula with equal weights, c.f. \cite{hoguni}.
Note that {\em a spherical cubature formula is projective if and only if it is
\emph{podal} }\cite{LV}, i.e. there are no pairs of antipodal nodes.

For our purposes it is important that every projective cubature formula of index $p$ is also of
all indices $d=0,2,\cdots p-2$. This immediately follows from \eref{eq:pin} \cite{LSsp,LSpf}.
Hence, a natural symmetrization of a podal spherical cubature formula of index $p$
is an antipodal formula of {\em degree} $p+1$ that means its validity for
{\em all} polynomials on $\mr^m$ of degrees $\leq p+1$.

Now note that the space $\homopk$  contains  all {\em elementary
polynomials} $\phi_{y;p}(x)=\abs{\ip{x,y}}^p$, $y\in\pole^m$. Moreover,
{\em any function $\phi\in\homopk$ is a linear
combination of elementary polynomials} \cite{LSpf}.
For this reason the projective cubature formula \eref{idodin} is equivalent to
the identity
\be\label{iddva}\int_{\sfer}\!\!\!\abs{\ip{x,y}}^p\mathrm{d}\sigma_{\delta m-1}(x)=
\sum_{k=1}^n\abs{\ip{x_k,y}}^p\rho_k,\quad y\in\km.
\ee
On the other hand,
\begin{equation}\label{unocinpr}
\int_{\sfer}\!\!\!\abs{\ip{x,y}}^p\mathrm{d}\sigma_{\delta m-1} (x)\!=
\!\gamma_{_{m,p;\pole}}\norm{y}^p,~~
\gamma_{_{m,p;\pole}}\!=\!{\rm const},\quad y\in\pole^m,
 \end{equation}
see \cite{LSsp}. For $\pole=\mr$ this is the identity applied by
Hilbert \cite{Hilbert} to solve the Waring problem in the number theory.
Irrespective to $\pole$, we call (\ref{unocinpr}) the \emph{Hilbert identity.}

 Comparing \eref{unocinpr} to \eref{iddva} we obtain
 \be\label{idtri}
\sum_{k=1}^n\abs{\ip{u_k,y}}^p=\norm{y}^p,~y\in\km,\ee where
$u_k=x_k\alpha_k$ with some $\alpha_k>0.$ This just means that
\emph{the linear mapping $y\mapsto(\ip{u_k,y})_{k=1}^n$ is an isometric embedding
$\ell_{2;\pole}^m\ra\ell_{p;\pole}^n$}. Moreover, this one is \emph{irreducible}
in the sense that every pair of the vectors $u_i,u_k$ is linearly
independent, in particular, all $u_k\neq 0$.  With any $u_k$'s the identity (\ref{idtri})
can be reduced to a similar identity with some $\widetilde{u_k}$'s,
$1\leq k\leq \widetilde{n}\leq n$, such that the corresponding isometric embedding is irreducible.

Conversely, \emph{every irreducible isometric embedding $\ell_{2;\pole}^m\ra\ell_{p;\pole}^n$
is generated by a projective cubature formula} since
\eref{idtri}\&\eref{unocinpr}$\Rightarrow$\eref{iddva} with $x_k=u_k/\norm{u_k}$ and
$\rho_k=\gamma_{_{m,p;\pole}}\norm{u_k}^p$ .
Thus, we have a 1-1 correspondence between projective cubature formulas of index $p$
with $n$ nodes on $\sfer$ and irreducible isometric embeddings
$\ell_{2;\pole}^m\ra\ell_{p;\pole}^n$.

Note that the image of any isometric embedding $\ell_{2;\pole}^m\ra\ell_{p;\pole}^n$ is an
Euclidean subspace of $\ell_{p;\pole}^n$, and all Euclidean subspaces are of this origin.

For any $(m,p)$ and large $n$ an identity of form (\ref{idtri}) can be derived from the Hilbert
identity directly (i.e. without (\ref{iddva})), see \cite{LSsp} and the references therein.
Accordingly, an isometric embedding $\ell_{2;\pole}^m\ra\ell_{p;\pole}^n$
exists with such $m,p,n$.
{\em The minimal $n$ such that  an isometric embedding $\ell_{2;\pole}^m\ra\ell_{p;\pole}^n$
exists is denoted by $\nmd$.} Every  {\em minimal} isometric embedding
$\ell_{2;\pole}^m\ra\ell_{p;\pole}^n$ (i.e. such that $n=\nmd$) is irreducible, obviously.
Thus, $\nmd$ {\em is also the minimal number of nodes in the projective cubature
formulas of index $p$ on $\sfer$}.

It is known that
\begin{equation}
\label{unotto}
    \nmd\leq\dim\homopk-1,
\end{equation}
see \cite{Lams} and the references therein.
For any fixed $m$ and $p\ra\infty$ the inequality \eref{unotto} combined with the
formulas \eref{eq:R}, \eref{eq:C} and \eref{eq:H} yields the asymptotical upper bound
\begin{equation}
\label{asunotto}
\nmd\lesssim \frac{p^{\ch(m-1)}}{c_m({\pole})}
\end{equation}
where
\begin{equation}
\label{casunotto}
c_m(\mr) =(m-1)!,\eqsp c_m(\mc) =4^{m-1}(m-1)!^2,\eqsp c_m(\mh) =16^{m-1}(2m-1)!(2m-2)!.
\end{equation}

The exact values $\nmd$
are unknown, except for some special cases, see \cite{hoguni}, \cite{K},
\cite{L}, \cite{LSe}, \cite{LV}, \cite{R}. The trivial examples are
\begin{equation}
\label{eq:simp}
N_{\pole}(1,p)=1, \eqsp N_{\pole}(m,2)=m.
\end{equation}
The simplest nontrivial example is $N_{\mr}(2,4)=3$, see \cite{L}. More generally,
\begin{equation}
\label{eq:scocit}
N_{\mr}(2,p)=p/2+1,
\end{equation}
see \cite{LV}, \cite{R}.

From \eref{eq:pin} it follows that
\be
\label{eq:redu}
N_{\pole}(m,p-2)\leq N_{\pole}(m,p).
\ee
Another useful inequality is
\be\label{eq:koly}
N_{\pole}(m,p)\leq N_{\mr}(\ch m,p)\leq N_{\mr}(\ch,p)N_{\pole}(m,p).
\ee
Here the left-hand side follows from \eref{eq:rin} immediately. With $\pole =\mc$
the right-hand side of \eref{eq:koly} follows from \cite{K}, Corollary 3. The proof
of the latter can be adapted to $\pole = \mh$.

In the present paper we construct a recursion with respect to $m$ for the projective
cubature formulas of index $p$ in  $\kpm$. For a large set of pairs $m,p$ this yields
the upper bounds for $\nkmp$ which are {\em effective} in the sense that they are
better than \eref{unotto}.
Later on we call the right-hand side of \eref{unotto} the {\em General Upper Bound},
briefly {\em GUB}. This is a polynomial in $p$ of degree $\ch(m-1)$. It is an open problem to
improve \eref{unotto} in general.

Our Main Theorem is
\Bth
\label{th:scsec}
Let $m\geq 2$, $p\geq 4$. Any projective cubature formula of index $p$ with $n$ nodes
on ${\bf S}(m-1,\pole)$ determines a
projective cubature formula of the same index with $n'$
nodes on ${\bf S}(m,\pole)$ where
\begin{equation}
\label{eq:mainth}
  n'=
\left\{\begin{array}{ll}
    \nu_{\pole}(p)(p/2+1)n, & p\equiv 2\pmod{4} \\
    \nu_{\pole}(p)((p/2)n+1), & p\equiv 0\pmod{4} \
  \end{array}\right.
\end{equation}
and
\begin{equation}
\label{eq:nukp}
  \nu_{\pole}(p)=N_{\mr}(\ch,2\bra{p/4}) =
\left\{\begin{array}{ll}
   N_{\mr}\sko{\ch,p/2-1}, & p\equiv 2\pmod{4} \\
   N_{\mr}\sko{\ch,p/2}, & p\equiv 0\pmod{4}. \
  \end{array}\right.
\end{equation}
\Eth
In fact, $\nu_{\mr}(p) = 1$ and $ \nu_{\mc}(p) =\bra{p/4} +1$ according to \eref{eq:simp} and
\eref{eq:scocit}, respectively. In contrast, for $\nu_{\mh}(p)$ we only have an upper bound
(see \eref{eq:nuovo}), except for $\nu_{\mh}(4)=N_{\mr}(4,2) =4$, see \eref{eq:simp} , and $\nu_{\mh}(8) =N_{\mr}(4,4)= 11$,
see \cite{R}, Proposition 9.26.

In terms of isometric embeddings the Theorem \ref{th:scsec} is reformulated as follows.
\Bth
\label{th:mtise}
Let $m\geq 2$, $p\geq 4$. Any irreducible isometric embedding
$\ell_{2;\pole}^{m-1}\ra\ell_{p;\pole}^n$ determines an  irreducible
isometric embedding $\ell_{2;\pole}^m\ra\ell_{p;\pole}^{n'}$ where
$n'$ is that of \eref{eq:mainth}.
\Eth
Taking $n=N_\pole(m-1,p)$ in \eref{eq:mainth} we obtain
\Bco
\label{cor:forcor41}
The inequality
\begin{equation}
\label{eq:scsedipart}
  \nmd\leq\left\{\begin{array}{ll}
    N_{\mr}\sko{\ch,p/2-1}(p/2+1)N_\pole(m-1,p), & p\equiv 2\pmod{4} \\
    N_{\mr}\sko{\ch,p/2}((p/2)N_\pole(m-1,p)+1), & p\equiv 0\pmod{4}
  \end{array}\right.
\end{equation}
holds.
\Eco
The inequality \eref{eq:scsedipart} being combined with the left-hand side
of \eref{eq:koly} yields
\Bco
\label{cor:prib}
The inequality
      \begin{equation}
\label{eq:scsedi}
  N_{\pole}(m,p)\leq \left\{\begin{array}{ll}
    N_{\mr}\sko{\ch,p/2-1}(p/2+1)N_{\mr}(\ch(m-1),p), & p\equiv 2\pmod{4} \\&\\
   N_{\mr}\sko{\ch,p/2}((p/2)N_{\mr}(\ch(m-1),p)+1), & p\equiv 0\pmod{4} \
  \end{array}\right.
\end{equation}
holds.
\Eco

We prove the Main Theorem in Section \ref{sec:main} using a series of lemmas
from Section \ref{sec:lem}.
The recursion \eref{eq:mainth} corresponds to a partial separation of spherical
coordinates and subsequent applying of some relevant cubature (in particular, quadrature)
formulas for the partial integrals. For the spherical cubature formulas and designs
this way is well known \cite{B0}, \cite{B}, \cite{Bon}, \cite{M}, \cite{RB},
\cite{St}, \cite{W}. The lemmas mentioned above allow us to realize the
recursion in the projective context. For the projective designs our
proof can be adapted by using of a quadrature formula of
Chebyshev type of degree $p/2$ instead of Gauss-Jacobi. This yields a
counterpart of Corollary \ref{cor:forcor41} with an upper bound for
the number of nodes instead of $p/2$.

In Section \ref{sec:appl} we reformulate the Main Theorem for each of three fields
separately and, as a result, explicitly. Then in each case we specify the
range of $m$ where the corresponding upper bound $N_{\pole}(m,p)\leq n$ is effective
for all $p$. In addition, the Main Theorem yields a lot of ``sporadic'' numerical upper
bounds arising  from some known ones. In Section \ref{sec:num} these results are
presented in form of tables.

\section{The lemmas}
\label{sec:lem}
\setcounter{equation}{0}

\begin{lem} \label{sigmalem}
Denote by $\widetilde{\sigma}_{r-1}$ the (non-normalized) surface area
on $\sferr{r}$,  $r\geq 2$.
Let $1\leq l\leq r-1$, and let $x=[\xi_i]_1^r\in
\sferr{r}
$, $y=[\xi_i]_1^l$, $z=[\xi_i]_{l+1}^r$,
$\rho=\|z\|$.With $\widehat{y}=y/\norm{y}$ and
$\widehat{z}=z/\norm{z}$ $(y,z\neq 0)$ the formula
 \be\label{SSAA}
\mathrm{d}\widetilde{\sigma}_{r-1}(x)=
(1-\rho^2)^{\frac{l}{2}-1}\rho^{r-l-1}\mathrm{d}\rho
\mathrm{d}\widetilde{\sigma}_{l-1}(\widehat{y})
\mathrm{d}\widetilde{\sigma}_{r-l-1}(\widehat{z})
\end{equation}
holds (under agreement $\mathrm{d}\widetilde{\sigma}_0(\cdot)=1$).
\end{lem}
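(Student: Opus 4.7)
The plan is to derive \eref{SSAA} by computing one and the same Lebesgue integral on a thin spherical shell in $\mr^r$ in two different ways, which is the standard recipe for obtaining a change-of-variables formula for surface measure.

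First I would fix a continuous test function $f$ on $\sferr{r}$, introduce $I(\ep) := \int_{\{1 < \norm{x} < 1+\ep\}} f(x/\norm{x})\,dx$, and evaluate it by the usual polar decomposition $dx = t^{r-1}\,dt\,d\widetilde{\sigma}_{r-1}(\hat{x})$ with $t = \norm{x}$, $\hat{x} = x/t$, giving $I(\ep) = \bigl(\int_1^{1+\ep} t^{r-1}\,dt\bigr)\int_{\sferr{r}} f\,d\widetilde{\sigma}_{r-1}$.

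Next I would decompose $x = (y,z)$ with $y\in\mr^l$, $z\in\mr^{r-l}$ and apply polar coordinates separately on each factor: $dy\,dz = s^{l-1}u^{r-l-1}\,ds\,du\,d\widetilde{\sigma}_{l-1}(\hat{y})\,d\widetilde{\sigma}_{r-l-1}(\hat{z})$, where $s = \norm{y}$, $u = \norm{z}$. Then I would substitute $(s,u) \mapsto (t,\rho)$ via $s = t\sqrt{1-\rho^2}$, $u = t\rho$, so that $t = \norm{x}$ and $\rho = \norm{z}/\norm{x}$ match the notation of the lemma. A direct $2\times 2$ Jacobian computation gives $\partial(s,u)/\partial(t,\rho) = t/\sqrt{1-\rho^2}$, while $s^{l-1}u^{r-l-1} = t^{r-2}(1-\rho^2)^{(l-1)/2}\rho^{r-l-1}$, so the Lebesgue weight becomes $t^{r-1}(1-\rho^2)^{l/2-1}\rho^{r-l-1}\,dt\,d\rho$. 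Because $y/\norm{x} = \sqrt{1-\rho^2}\,\hat{y}$ and $z/\norm{x} = \rho\,\hat{z}$, the argument of $f$ is independent of $t$, and the factor $\int_1^{1+\ep} t^{r-1}\,dt$ pulls out cleanly.

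Comparing the two expressions for $I(\ep)$ and using the arbitrariness of $f$ (for example by dividing out the common radial factor and letting $\ep \to 0^+$) reads off \eref{SSAA}. The only genuinely nontrivial step is the Jacobian calculation, which is routine; the mild subtlety is that the coordinates $(\rho,\hat{y},\hat{z})$ degenerate on the measure-zero locus $\{y=0\}\cup\{z=0\}$, which one simply excludes. The boundary cases $l=1$ and $l=r-1$ are then covered automatically by the stated convention $d\widetilde{\sigma}_0(\cdot)=1$.
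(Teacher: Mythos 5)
Your proof is correct, but it takes a genuinely different route from the paper. The paper works intrinsically on the sphere: it parametrizes $\sferr{r}$ by $x=h(\rho,\widehat{y},\widehat{z})$ with explicit spherical coordinates $\theta,\varphi$ on the factor spheres, forms the Jacobi matrix of $h$, shows that the associated Gram matrix is block-diagonal (using the orthogonality relations $\widehat{y}\,'Y=0$, $\widehat{z}\,'Z=0$ coming from differentiating $\norm{\widehat{y}}^2=\norm{\widehat{z}}^2=1$), and reads off $\mathrm{d}\widetilde{\sigma}_{r-1}=\sqrt{\det\Gamma}\,\mathrm{d}\rho\,\mathrm{d}\theta\,\mathrm{d}\varphi$ with $\det\Gamma=(1-\rho^2)^{l-2}\rho^{2(r-l-1)}\det G\det H$. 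You instead compute a Lebesgue integral over a thin shell in $\mr^r$ two ways, which trades the Gram-determinant computation for the standard polar decomposition in each factor $\mr^l$, $\mr^{r-l}$ plus a single $2\times 2$ Jacobian $\partial(s,u)/\partial(t,\rho)=t/\sqrt{1-\rho^2}$; your bookkeeping $s^{l-1}u^{r-l-1}\cdot t/\sqrt{1-\rho^2}=t^{r-1}(1-\rho^2)^{l/2-1}\rho^{r-l-1}$ is right and reproduces \eref{SSAA} exactly, and the observation that $x/\norm{x}=\sqrt{1-\rho^2}\,\widehat{y}\oplus\rho\,\widehat{z}$ is independent of $t$ is precisely what lets the radial factor cancel. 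What your approach buys is that it needs no explicit coordinates on the factor spheres and isolates the degeneracies $\{y=0\}\cup\{z=0\}$ as a harmless null set; what the paper's approach buys is a self-contained derivation from the first fundamental form that exhibits the product structure of the metric directly. Both handle the edge cases $l=1$, $l=r-1$ at the same (informal) level, via the convention $\mathrm{d}\widetilde{\sigma}_0(\cdot)=1$.
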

\begin{proof}
The column $x$ can be written in the form
 \be\label{invziga2}
x=h(\rho,\widehat{y},\widehat{z})
=\begin{bmatrix}\sqrt{1-\rho^2}\widehat{y}\\ \rho
\widehat{z}\end{bmatrix}.
\end{equation}
Denote by $\theta=(\theta_1,\ldots,\theta_{l-1})$ and $\varphi=(\varphi_1,\ldots,
\varphi_{r-l-1})$ where $\theta_k$ and $\varphi_j$ are the spherical coordinates of
$\widehat{y}\in\sferr{l}$ and $\widehat{z}\in\sferr{r-l}$,
respectively. (For $l=1$ there is no $\theta$, for $l=r-1$ there is no $\varphi$.)
From \eref{invziga2} we obtain the
Jacobi matrix
\bez
J=\frac{\mathcal{D}h(\rho,\widehat{y},\widehat{z})}{\mathcal{D}(\rho,\theta,\varphi)}=
\begin{bmatrix}
-\cfrac{\rho\widehat{y}}{\sqrt{1-\rho^2}}
&\sqrt{1-\rho^2}Y&0\\ \widehat{z}&0&\rho Z
\end{bmatrix},
\eez
where $\bra{\widehat{\xi}_i}_1^l=\widehat{y}$, $\bra{\widehat{\xi}_i}_{l+1}^r=\widehat{z}$,
$$
Y=\bra{\frac{\partial\widehat{\xi_i}}{\partial\theta_k}} _{1\leq i\leq l, 1\leq k\leq l-1} , \eqsp
Z=\bra{\frac{\partial\widehat{\xi_i}}{\partial\varphi_j}}_{l+1\leq i\leq r, 1\leq j\leq r-l-1}.
$$
(There is no $Y$ for $l=1$, no $Z$ for $l=r-1$.)

The corresponding  Gram matrix  is
\be\label{eq:gram}\Gamma=J'J=\begin{bmatrix}(1-\rho^2)^{-1}&0&0
\\ 0&(1-\rho^2)Y'Y&0\\  0&0&\rho^2Z'Z\end{bmatrix}. \ee
where dash means conjugation. Indeed, $\norm{\widehat{y}}^2=\norm{\widehat{z}}^2=1$ and
$$\widehat{y}~'Y=
\sum_{i=1}^l\widehat{\xi}_i\frac{\partial\widehat{\xi}_i}{\partial\theta_k}=
\half\frac{\partial}{\partial\theta_k}\left(\sum_{i=1}^l\widehat{\xi}_i^2\right)
=0,\quad 1\leq k\leq l-1,$$ and   $$
\widehat{z}~^\prime Z=\sum_{i=l+1}^r\widehat{\xi}_i\frac{\partial\widehat{\xi}_i}{\partial
\varphi_j}=\half\frac{\partial }{\partial\varphi_j}\sko{\sum_{i=l+1}^r \widehat{\xi}_i^2}=0,
\quad 1\leq j\leq r-l-1.$$
Note that $G\equiv Y'Y$ and $H\equiv Z'Z$ are the Gram matrices
for the Jacobi matrices $Y$ and $Z$ of the mappings
$(\theta_1,\ldots,\theta_{l-1})\mapsto
(\widehat{\xi}_1,\ldots,\widehat{\xi}_l)$ and
$(\varphi_1,\ldots,\varphi_{r-l-1})\mapsto(\widehat{\xi}_{l+1},\ldots,
\widehat{\xi}_r)$, respectively. From \eref{eq:gram} it follows that
$$\det \Gamma=(1-\rho^2)^{l-2}\rho^{2(r-l-1)}\det G\det H. $$
This results in \eref{SSAA} since
 \bez
\mathrm{d}\widetilde{\sigma}_{r-1}(x)&=&\sqrt{\det
\Gamma}~\mathrm{d}\rho\mathrm{d}\theta_1\ldots
\mathrm{d}\theta_{l-1}\mathrm{d}\varphi_1\ldots
\mathrm{d}\varphi_{r-l-1}
\eez
and  $$\mathrm{d}\widetilde{\sigma}_{l-1}(\widehat{y})=\sqrt{\det
G}~\mathrm{d}\theta_1\ldots
\mathrm{d}\theta_{l-1},\quad \mathrm{d}\widetilde{\sigma}_{r-l-1}(\widehat{z})=\sqrt{\det H}
~\mathrm{d}\varphi_1\ldots
\mathrm{d}\varphi_{r-l-1}.$$
\end{proof}

Now let $x\in\sfer$, $m\geq 2$. Then $x=\eta\oplus z$ where
$\eta\in\pole$ and $z\in\pole^{m-1}$, and then
\begin{equation}\label{eq:cinseu}
 x=\sqrt{1-\rho^2}\theta\oplus\rho w,\quad
  \rho\in\bra{-1,1},
  \quad\theta\in\mathbf{S}(1,\pole)\equiv\sferr{\ch},
  \quad w\in\sferk{m-1}\equiv
  \sferr{\ch (m-1)}.
\end{equation}
Accordingly, we set
\begin{equation}\label
{eq:scsed}\phi(\rho,\theta,w)=\phi(\sqrt{1-\rho^2}\theta\oplus\rho w)
\end{equation}
for a  continuous function $\phi(x)$.
Obviously, $  \phi(-\rho,\theta,-w)=\phi(\rho,\theta,w).$
If $\phi(x)$ is central symmetric, i.e. $\phi(-x)=\phi(x)$,
then $\phi(\rho,-\theta,-w)=\phi(\rho,\theta,w)$. As a result,
$  \phi(-\rho,\theta,w)=\phi(\rho,-\theta,w)$.
Therefore, the $\mathbf{Z}_2$-average with respect to
$\rho$, i.e.
\begin{equation}\label{eq:scsec}
  \widetilde{\phi}(\rho,\theta,w)=
  \half\sko{\phi(\rho,\theta,w)+\phi(-\rho,\theta,w)},
\end{equation}
coincides with the $\mathbf{Z}_2$-average with respect to
$\theta$:
\begin{equation}\label{eq:scses}
  \widetilde{\phi}(\rho,\theta,w)=
  \half\sko{\phi(\rho,\theta,w)+\phi(\rho,-\theta,w)}.
\end{equation}

Now we consider the integral
\begin{equation}
\label{eq:scsese}
I_{\phi}(w)= \int_{\mathbf{S}(1,\pole)}\D\sigma_{\ch-1}(\theta)
  \int_0^1\phi(\rho,\theta,w)\pi(\rho)\D\rho
\end{equation}
with any integrable $\pi(\rho)$.
\Ble
\label{lem:scseu}
If $\phi(x)$ is central symmetric then $I_{\phi}(w)=I_{\widetilde{\phi}}(w)$.
\Ele
\begin{proof}
This follows from \eref{eq:scses} since the measure
$\sigma_{\ch-1}(\theta)$ is central symmetric.
\end{proof}
\Ble\label{lem:scsed}
If $\phi(x)$ is $\uk$-invariant then $I_{\phi}(w)$ is also $\uk$-invariant.
\Ele
\begin{proof}
 From \eref{eq:scsed} it follows that
$  \phi(\rho,\theta\alpha,w\alpha)=\phi(\rho,\theta,w)$
for all $\alpha\in\uk$. On the other hand, the measure $\sigma_{\ch-1}(\theta)$
is $\uk$-invariant.
\end{proof}

Actually, only the functions $\phi(x)$ from $\homopk$ are
needed for our purposes.
 \Ble\label{lem:scset}
If $\phi(x)$ belongs to $\homopk$ then the function $I_{\phi}(w)$ belongs to
$\homok|{\bf S}(m-1,\pole)$.
 \Ele
\begin{proof}
In view of the Lemma \ref{lem:scsed} and inclusion \eref{eq:pin} we only
have to prove that $I_{\phi}(w)$ is the restriction to the unit sphere of a polynomial
of degree $\leq p$ on $\mr^{\ch(m-1)}$. Since $\homopk = {\rm Span}\{\phi_{y;p}:y\in\pole^m\}$
and since the mapping $\phi\mapsto I_{\phi}$ is linear,
we can assume that $\phi(x) =  \phi_{y;p}(x)=\abs{\ip{x,y}}^p$, $y\in\pole^{m}$.
Let $y=\xi\oplus v$ where $\xi\in\pole$, $v\in\pole^{m-1}$. Then by \eref{eq:scsed}
\begin{equation}
\label{eq:scsen}
  \begin{split}
  \phi_{y;p}(\rho,\theta,w)=&\abs{\sqrt{1-\rho^2}\overline{\theta}\xi
  +\rho\ip{w,v}}^p \\=&\sko{(1-\rho^2)\abs{\xi}^2+\rho^2\abs{\ip{w,v}}^2 +
  2\rho\sqrt{1-\rho^2}\Re\mathrm{e}\sko{\overline{\xi}\theta\ip{w,v}}}^{p/2}.
  \end{split}
\end{equation}
With fixed $\rho$ and $\theta$ let us consider the right-hand side of \eref{eq:scsen} as a function
of  $w\in\mr^{\ch(m-1)}$. This is a polynomial of degree $\leq p$. Therefore, such is $I_{\phi}(w)$
obtained by substitution of \eref{eq:scsen} into the integral \eref{eq:scsese}.
\end{proof}
The last lemma we need is
 \Ble\label{lem:scseq}
If $\phi(x)$ belongs to $\homopk$ then with a fixed $w$ the function
$\widetilde{\phi}(\rho,\theta,w)$ defined by \eref{eq:scsec}
is a linear combination of functions of form
$f(\rho^2)\sko{\ip{\theta,\zeta}_{\mr}}^{2q}$
where $f$ is a  polynomial of degree $\leq p/2$, $0\leq q\leq\bra{p/4}$,
$\zeta\in\pole$,  ${\ip{\theta,\zeta}_{\mr}} =\Re\mathrm{e}(\overline{\theta}\zeta)$.
\Ele
\begin{proof}
As before, it suffices to consider $\phi =\phi_{y;p}$, so we can use \eref{eq:scsen}.
Note that
$$
\Re\mathrm{e}\sko{\overline{\xi}\theta\ip{w,v}} =\Re\mathrm{e}\sko{\ip{v,w}\overline{\theta}\xi}
=\Re\mathrm{e}\sko{\overline{\theta}\xi\ip{v,w}} = \ip{\theta,\zeta}_{\mr}
$$
where $\zeta=\xi\ip{v,w}$. We have
\begin{equation*}
  \phi(\rho,\theta,w)=
  \sko{A(\rho^2)+B(\rho^2)\mathrm{sign}(\rho)\ip{\theta,\zeta}_{\mr}}^{p/2}
\end{equation*}
where
\begin{equation*}
  A(t)=\abs{\xi}^2(1-t)+\abs{\ip{w,v}}^2 t,\quad B(t)=\sqrt{4t(1-t)}.
\end{equation*}
Hence,
\begin{equation*}
  \phi(\rho,\theta,w)=\sum_{k=0}^{p/2}\binom{p/2}{k}A(\rho^2)^{p/2-k}
  B(\rho^2)^k\sko{\mathrm{sign}(\rho)}^k(\ip{\theta,\zeta}_{\mr})^k,
\end{equation*}
and then \eref{eq:scsec} yields
\begin{equation*}
  \widetilde{\phi}(\rho,\theta,w)=
  \sum_{q=0}^{\bra{p/4}}\binom{p/2}{2q}A(\rho^2)^{p/2-2q}
  B(\rho^2)^{2q}(\ip{\theta,\zeta}_{\mr})^{2q}.
\end{equation*}
It remains to note that $A(t)^{p/2-2q} B(t)^{2q}$ is a polynomial of degree $\leq p/2$
for every $q\leq\bra{p/4}.$
   \end{proof}

\section{Proof of the Main Theorem}
\label{sec:main}
 \setcounter{equation}{0}
Let $\phi\in\homopk$, $x\in\sfer,~\phi(x)=\phi(\rho,\theta,w)$ as in \eref{eq:scsed}.
According to Lemma \ref{sigmalem} with $r=\ch m$ and
$l=\ch$, we have
\begin{equation*}
\int_{\sfer}\phi(x)\D\sigma_{r-1}(x)=
\int_{\sferk{m-1}}\D\sigma_{r-\ch-1}(w)\int_{\mathbf{S}(1,\pole)}
\D\sigma_{\ch-1}(\theta) \int_0^1
\phi(\rho,\theta,w)\pi_{\alpha,\beta}(\rho)\D\rho
\end{equation*}
where
\begin{equation*}
  \pi_{\alpha,\beta}(\rho)=C\rho^{2\alpha+1}(1-\rho^2)^{\beta},
\quad \alpha=\ch(m-1)/2,\quad \beta=\ch/2-1,
\end{equation*}
the constant $C=C_{r,\ch}$ comes from the normalization of the areas in \eref{SSAA}:
\begin{equation*}
\int_0^1\pi_{\alpha,\beta}(\rho)\D\rho =1.
\end{equation*}
By \eref{eq:scsese} and Lemma \ref{lem:scseu} we get
\begin{equation*}
  \int_{\sfer}\phi(x)\D\sigma_{r-1}(x)=\int_{\sferk{m-1}}\D\sigma_{r-\ch-1}(w)
\int_{\mathbf{S}(1,\pole)}\D\sigma_{\ch-1}(\theta) \int_0^1
\widetilde{\phi}(\rho,\theta,w)\pi_{\alpha,\beta}(\rho)\D\rho.
\end{equation*}

Lemma \ref{lem:scset} allows us to apply a
projective cubature formula of index $p$ on
$\sferk{m-1}$ existing by
assumption. If its nodes and weights are $w_i$ and
$\lambda_i$, $1\leq i\leq n$, respectively, then
\begin{equation}\label{eq:scsequ}
   \int_{\sfer}\phi(x)\D\sigma_{r-1}(x)=\sum_{i=1}^n \lambda_i
   \int_{\mathbf{S}(1,\pole)}\D\sigma_{\ch-1}(\theta) \int_0^1
\widetilde{\phi}(\rho,\theta,w_i)\pi_{\alpha,\beta}(\rho)\D\rho.
\end{equation}

By Lemma \ref{lem:scseq} the integrals against $\D\sigma_{\ch-1}(\theta)$ in \eref{eq:scsequ}
can be calculated by a podal spherical cubature formula of index $2\bra{p/4}$ on
$\mathbf{S}(1,\pole)\equiv\sferr{\ch}$.
The minimal number of nodes in such a formula is
\be
\label{eq:minu}
\nu = N_{\mr}(\ch,2\bra{p/4})=
\left\{\begin{array}{ll}
    N_{\mr}\sko{\ch,p/2-1}, & p\equiv 2\pmod{4} \\
    N_{\mr}\sko{\ch,p/2}, & p\equiv 0\pmod{4}.
  \end{array}\right.
\ee
As a result,
\begin{equation}\label{eq:scsequi}
  \int_{\sfer}\phi(x)\D\sigma_{r-1}(x)=\sum_{i=1}^n\sum_{j=1}^{\nu}
  \lambda_i\mu_j \int_0^1
\widetilde{\phi}(\rho,\theta_j,w_i)\pi_{\alpha,\beta}(\rho)\D\rho
\end{equation}
where $\theta_j$ and $\mu_j$ are the corresponding nodes and weights.

Now we consider the integral
\begin{equation*}
%\label{eq:scsesed}
  \int_0^1
  f(\rho^2)\pi_{\alpha,\beta}(\rho)\D\rho=\int_0^1
  f(\tau)\chi_{\alpha,\beta}(\tau)\D\tau
\end{equation*}
where $f$ is a polynomial of degree $\leq p/2$ and
\begin{equation*}
%\label{eq:scseds}
\chi_{\alpha,\beta}(\tau)=\cfrac{\pi_{\alpha,\beta}(\sqrt{\tau})}{2\sqrt{\tau}}=
\half C\tau^{\alpha}(1-\tau)^{\beta},\eqsp\int_0^1\chi_{\alpha,\beta}(\tau)\D\tau =1.
\end{equation*}
Assume that $p\equiv 2\pmod{4}$, i.e. $p/2$ is odd. Since  $\deg f\leq p/2=2(p+2)/4-1$,
the classical Gauss-Jacobi quadrature formula yields
\begin{equation}\label{eq:scsedot}
\int_0^1f(\tau)\chi_{\alpha,\beta}(\tau)\D\tau=\sum_{k=1}^{(p+2)/4}\omega_kf(\tau_k)
\end{equation}
with relevant nodes and weights, see \cite{Sz}, Theorems 3.4.1 and 3.4.2. Therefore,
\begin{equation*}
%\label{eq:scsedn}
  \int_0^1
  f(\rho^2)\pi_{\alpha,\beta}(\rho)\D\rho=\sum_{k=1}^{(p+2)/4}\omega_k
  f(\rho_k^2),\quad \rho_k=\sqrt{\tau_k}.
\end{equation*}
By Lemma \ref{lem:scseq}
\begin{equation}\label{eq:scsew}
  \begin{split}
  \int_0^1
\widetilde{\phi}(\rho,\theta_j,w_i)\pi_{\alpha,\beta}(\rho)\D\rho=&
\sum_{k=1}^{(p+2)/4}\omega_k
\widetilde{\phi}(\rho_k,\theta_j,w_i) \\ =&\half
\sum_{k=1}^{(p+2)/4}\omega_k\Bigl(\phi(\rho_k,\theta_j,w_i)+
\phi(\rho_k,-\theta_j,w_i)\Bigr)
  \end{split}
\end{equation}
for all $1\leq i\leq n$, $1\leq j\leq\nu$. The substitution from
\eref{eq:scsew} into \eref{eq:scsequi} yields
\begin{equation}\label{eqq:scsewud}
  \int_{\sfer}\phi(x)\D\sigma_{r-1}(x)=\sum_{i=1}^n\sum_{j=1}^{\nu}
  \sum_{k=1}^{(p+2)/4}\vr_{ijk}\sko{\phi(x_{ijk}^+)+\phi(x_{ijk}^-)}
\end{equation}
where
\begin{equation}\label{eq:scsewdd}
  x_{ijk}^{\pm}=\pm \theta_j\sqrt{1-\rho_k^2}\oplus\rho_k
  w_i,\quad \vr_{ijk}=\half \lambda_i\mu_j\omega_k.
\end{equation}
The number of nodes  $x_{ijk}^{\pm}$ is
\be
n' = (p/2+1)\nu n = N_{\mr}(\ch, p/2-1)(p/2+1)n
\ee
according to \eref{eq:minu}.

Now let $p\equiv 0\pmod{4}$, i.e. let $p/2$ be even. In this case, instead of
\eref{eq:scsedot}, we use its Markov's modification (see \cite{M}, formula (1.16)):
\begin{equation}\label{eq:scsesu}
  \int_0^1
  f(\tau)\chi_{\alpha,\beta}(\tau)\D\tau=\omega_0f(0)+\sum_{k=1}^{p/4}
  \omega_k f(\tau_k).
\end{equation}
This is valid for all polynomials $f$ of $\deg f\leq 2(p/4)=p/2$.
(Of course, the nodes and the weights in \eref{eq:scsesu} are different from those of
\eref{eq:scsedot}.) As before,
\begin{equation*}
%\label{eq:scsewd}
  \int_0^1
\widetilde{\phi}(\rho,\theta_j,w_i)\pi_{\alpha,\beta}(\rho)\D\rho=
\omega_0\phi(0,\theta_j,w_i)  +\half
\sum_{k=1}^{p/4}\omega_k\Bigl(\phi(\rho_k,\theta_j,w_i)+
\phi(\rho_k,-\theta_j,w_i)\Bigr)
\end{equation*}
and then
\begin{equation}
\label{eq:scsewt}
  \int_{\sfer}\phi(x)\D\sigma_{r-1}(x)=\sum_{j=1}^{\nu}\vr_{j}\phi(x_j)
  +\sum_{i=1}^n\sum_{j=1}^{\nu}
  \sum_{k=1}^{p/4}\vr_{ijk}\sko{\phi(x_{ijk}^+)+\phi(x_{ijk}^-)}
\end{equation}
where
\begin{equation}
\label{eq:nno}
x_j=\theta_j\oplus 0,\eqsp \vr_{j}=\mu_j\omega_0\sum_{i=1}^n\lambda_i =\mu_j\omega_0,
\end{equation}
the rest of nodes and weights is determined as in \eref{eq:scsewdd}.
Now the total number of nodes is
\be
n' = \nu + (p/2)\nu n = N_{\mr}(\ch, p/2)((p/2)n+1)
\ee
according to \eref{eq:minu} again.

It remains to note that in each of formulas \eref{eqq:scsewud} and \eref{eq:scsewt}
the nodes are projectively distinct. \hfill$\square$

\section{Some applications}
\label{sec:appl}
 \setcounter{equation}{0}

Further $m\geq 2$, $p\geq 4$ as in the Main Theorem. It is convenient to set
$p=2s$, so $s$ is an integer, $s\geq 2$.

Let us start with $\pole = \mc$. In this case the Main Theorem takes the form of
\Bth
\label{cor:scsese}
Any projective cubature formula of index $2s$ with $n$
nodes on $\sferc{m-1}$ determines a
projective cubature formula of the same index with $n'$
nodes on $\sferc{m}$  where
\begin{equation}\label{eq:scsews}
  n'=\left\{\begin{array}{ll}
    \dfrac{(s+1)^2}{2} n, & s\equiv 1\pmod{2} \\~\\
    \dfrac{s+2}{2}(sn+1), & s\equiv 0\pmod{2}. \
  \end{array}\right.
\end{equation}
\Eth
\begin{proof}
By \eref{eq:nukp} and \eref{eq:scocit}
\begin{equation*}
\nu_{\mc}(2s)=N_{\mr}(2,2\bra{s/2})= \bra{s/2}+1=
\left\{\begin{array}{ll}
    \dfrac{s+1}{2}, & s\equiv 1\pmod{2} \\&\\
    \dfrac{s+2}{2}, & s\equiv 0\pmod{2}. \
  \end{array}\right.
\end{equation*}
\end{proof}
The Corollary \ref{cor:forcor41} reduces to
\Bco
\label{cor:mtc}
The inequality
\begin{equation}
\label{eq:scowc}
  N_{\mc}(m,2s)\leq\left\{\begin{array}{ll}
 \dfrac{(s+1)^2}{2} N_{\mc}(m-1,2s), & s\equiv 1\pmod{2} \\&\\
    \dfrac{s+2}{2}\left(sN_{\mc}(m-1,2s)+1\right), & s\equiv 0\pmod{2}, \
  \end{array}\right.
\end{equation}
holds.
\Eco
In particular,
\begin{equation}
\label{eq:ubc2}
  N_{\mc}(2,2s)\leq
\left\{\begin{array}{ll}
\dfrac{(s+1)^{2}}{2},& s\equiv 1\pmod{2},\\&\\
\dfrac{(s+2)(s+1)}{2} , &s\equiv 0\pmod{2},
\end{array}\right.
\end{equation}
since $N_{\mc}(1,2s) =1$. Asymptotically,
\begin{equation}
\label{eq:scown}
  N_{\mc}(2,2s)\lesssim\frac{1}{2}s^2,\quad s\ra\infty.
\end{equation}

Taking $m=3$ in \eref{eq:scowc} and using \eref{eq:ubc2} we obtain
\begin{equation}
\label{eq:ubc3}
  N_{\mc}(3,2s)\leq
\left\{\begin{array}{ll}
\dfrac{(s+1)^{4}}{4},& s\equiv 1\pmod{2},\\&\\
\\\dfrac{s+2}{2}\left(\dfrac{(s+2)(s+1)s}{2}+1\right) , &s\equiv 0\pmod{2},
\end{array}\right.
\end{equation}
whence
\begin{equation}
\label{eq:scownn}
  N_{\mc}(3,2s)\lesssim\frac{1}{4}s^4,\quad s\ra\infty.
\end{equation}

{\em The upper bounds \eref{eq:ubc2} and \eref{eq:ubc3} are effective.}
Indeed, for $\pole = \mc$ the cases $m=2,3$ in GUB (i.e., in \eref{unotto}) are
\begin{equation}
\label{eq:gubc}
N_{\mc}(2,2s)\leq (s+1)^2, \eqsp  N_{\mc}(3,2s)\leq \dfrac{(s+2)^2(s+1)^2}{4},
\end{equation}
that is worse than \eref{eq:ubc2} and \eref{eq:ubc3}, respectively.
Asymptotically, \eref{eq:scown} also remains effective, i.e. better
than what the first inequality \eref{eq:gubc} implies. However, \eref{eq:scownn} coincides with
the corresponding consequence of \eref{eq:gubc}. (Clearly, it cannot be worse.)

The next iteration of \eref{eq:scowc} yields an ineffective upper bound for
$N_{\mc}(m,2s)$, $m\geq 4$. However, for some $s$ the effectiveness may be reached  by
using a more precise bound (or an exact value, if any) for $N_{\mc}(m-1,2s)$
in \eref{eq:scowc}. Also, some effective bounds can be improved in this way.
In Section 5 the reader can find a lot of examples of this approach (for all three fields).
One of them is below.
\begin{ex}
{\em From the known (see \cite{LSe} ) equality $N_{\mc}(2,8)=10$
it follows that
\begin{equation*}
N_{\mc}(3,8)\leq 3(4N_{\mc}(2,8)+1) = 123,
\end{equation*}
while \eref{eq:ubc3} yields $N_{\mc}(3,8)\leq 183$.}
\end{ex}

The following is the iterated form of Theorem \ref{cor:scsese}.
\Bth
\label{thm:itc}
Any projective cubature formula of index $2s$ with $n$
nodes on $\sferc{m-1}$ determines a
projective cubature formula of the same index with $n^{(l)}$
nodes on $\sferc{m + l -1}$, where $l\geq 0$ and
\begin{equation}
\label{eq:utc}
n^{(l)} = \left\{\begin{array}{ll}
\dfrac{(s+1)^{2l}}{2^{l}}n, & s\equiv 1\pmod{2},\\&\\
\left(A\dfrac{(s+2)^ls^l}{2^l} +B\right)n, &s\equiv 0\pmod{2},
\end{array}\right.
\end{equation}
with
\be
A=\frac{(s+3)s}{(s+2)s-2}\eqsp,\eqsp B=- \frac{s+2}{(s+2)s-2}.
\ee
\Eth
\begin{proof}
For any $s$ the sequence on the right-hand side
of \eref{eq:utc} satisfies the recurrent relation  \eref{eq:scsews},
and $N^0 = N$ since $A+B=1$.
\end{proof}
\Bco
\label{cor:mtcl}
The inequality
\begin{equation}
\label{eq:scowcc}
  N_{\mc}(m+l-1,2s)\leq\left\{\begin{array}{ll}
 \dfrac{(s+1)^{2l}}{2^l} N_{\mc}(m-1,2s), & s\equiv 1\pmod{2} \\&\\
\left(A\dfrac{(s+2)^ls^l}{2^l} +B\right) N_{\mc}(m-1,2s)
& s\equiv 0\pmod{2}. \
  \end{array}\right.
\end{equation}
holds.
\Eco

Now let us proceed to $\pole =\mr$. In this case we have
\Bth
\label{cor:scses}
Any podal spherical cubature formula of index $2s$ with $n$
nodes on $\sferr{m-1}$ determines a
podal spherical cubature formula of the same index with
$n'$ nodes on $\sferr{m}$ where
\begin{equation}
\label{eq:scsewq}
  n'=\left\{\begin{array}{ll}
    (s+1)n, & s\equiv 1\pmod{2} \\&\\
    sn+1, & s\equiv 0\pmod{2}.
  \end{array}\right.
\end{equation}
\Eth
\begin{proof}
$\nu_{\mr}(2s) = N_{\mr}(1, 2\bra{s/2}) = 1$.
\end{proof}
\Bco
\label{cor:mtr}
\begin{equation}
\label{eq:scociu}
  N_{\mr}(m,2s)\leq\left\{\begin{array}{ll}
    (s+1)N_{\mr}(m-1,2s), & s\equiv 1\pmod{2} \\&\\
    sN_{\mr}(m-1,2s)+1, & s\equiv 0\pmod{2}. \
  \end{array}\right.
\end{equation}
\Eco

For $m=2$ both inequalities \eref{eq:scociu} reduce to $N_{\mr}(2,2s)\leq s+1$.
(In fact, $N_{\mr}(2,2s)=s+1$, see \eref{eq:scocit}.) Hence,
\begin{equation}
\label{eq:scociq}
N_{\mr}(3,2s)\leq\left\{\begin{array}{ll}
    (s+1)^2, & s\equiv 1\pmod{2} \\&\\
    s^2+s+1& s\equiv 0\pmod{2},
  \end{array}\right.
\end{equation}
thus
\begin{equation}\label{eq:scocic}
  N_{\mr}(3,2s)\lesssim s^2,\quad s\ra\infty.
\end{equation}
The next iteration yields
\begin{equation}
\label{eq:r42sp}
N_{\mr}(4,2s)\leq\left\{\begin{array}{ll}
    (s+1)^3, & s\equiv 1\pmod{2} \\&\\
   (s^2+1)(s+1)& s\equiv 0\pmod{2}.
  \end{array}\right.
\end{equation}
However, the latter can be improved by means of the inequality
\begin{equation}
\label{eq:nuov}
    N_{\mr}(2m,2s)\leq (s+1)N_{\mc}(m,2s)
\end{equation}
which is just the case $\ch = 2$ on the right-hand side of \eref{eq:koly}. Indeed,
\begin{equation}
\label{eq:nuovo}
N_{\mr}(4,2s)\leq (s+1)N_{\mc}(2,2s)\leq
\left\{\begin{array}{ll}
   \dfrac{(s+1)^3}{2} , & s\equiv 1\pmod{2} \\&\\
    \frac{(s+2)(s+1)^2}{2}, & s\equiv 0\pmod{2}. \
  \end{array}\right.
\end{equation}
This is better than \eref{eq:r42sp},
except for the case $s=2$, i.e. for $N_{\mr}(4,4)=11$. From \eref{eq:nuovo} we get
\begin{equation}
\label{eq:asr4}
  N_{\mr}(4,2s)\lesssim \dfrac {s^3}{2},\quad s\ra\infty,
\end{equation}
instead of $N_{\mr}(4,2s)\lesssim s^3$ that follows from \eref{eq:r42sp}.

Similarly,
\be
\label{eq:nuovoo}
N_{\mr}(6,2s)\leq (s+1)N_{\mc}(3,2s)\leq
\left\{\begin{array}{ll}
\dfrac{(s+1)^{5}}{4},& s\equiv 1\pmod{2},\\&\\
\\\dfrac{(s+2)(s+1)}{2}\left(\dfrac{(s+2)(s+1)s}{2}+1\right) , &s\equiv 0\pmod{2},
\end{array}\right.
\ee
by \eref{eq:ubc3}. Hence,
\begin{equation}
\label{eq:asr6}
  N_{\mr}(6,2s)\lesssim \dfrac {s^5}{4},\quad s\ra\infty.
\end{equation}

In addition, from \eref{eq:scociu} and \eref{eq:nuovo} it follows that
\be
\label{eq:uovo}
N_{\mr}(5,2s)\leq
\left\{\begin{array}{ll}
   \dfrac{(s+1)^4}{2} , & s\equiv 1\pmod{2} \\&\\
    \dfrac{(s+2)(s+1)^2s}{2} + 1, & s\equiv 0\pmod{2}, \
  \end{array}\right.
\ee
hence,
\begin{equation}
\label{eq:asr5}
  N_{\mr}(5,2s)\lesssim \dfrac {s^4}{2},\quad s\ra\infty.
\end{equation}

{\em All upper bounds for $ N_{\mr}(m,2s)$, $3\leq m\leq 6$, obtained above are effective,
even asymptotically, c.f. \eref{asunotto}}.

The $\mr$-counterpart of Theorem \ref{thm:itc} looks simpler.
\Bth
\label{cor:itr}
Any podal spherical cubature formula of index $2s$ with $n$
nodes on $\sferr{m-1}$ determines a
podal spherical cubature formula of the same index with
$n^{(l)}$ nodes on $\sferr{m+l-1}$ where $l\geq 0$ and
\begin{equation}
\label{eq:scsewql}
  n^{(l)}=\left\{\begin{array}{ll}
    (s+1)^l n, & s\equiv 1\pmod{2} \\&\\
    s^ln+\dfrac{s^l-1}{s-1} , & s\equiv 0\pmod{2}.
  \end{array}\right.
\end{equation}
\Eth
\begin{proof}
Induction on $l$.
\end{proof}
\Bco
\label{cor:mtrn}
\begin{equation}\label{eq:scociuu}
  N_{\mr}(m+l-1,2s)\leq\left\{\begin{array}{ll}
    (s+1)^lN_{\mr}(m-1,2s), & s\equiv 1\pmod{2} \\&\\
    s^lN_{\mr}(m-1,2s)+\dfrac{s^l-1}{s-1} , & s\equiv 0\pmod{2}. \
  \end{array}\right.
\end{equation}
\Eco

It remains to consider the case $\pole=\mh$.
\Bth
\label{th:scsese}
Any projective cubature formula of index $2s$ with $n$ nodes
on ${\bf S}(m-1,\mh)$ determines a
projective cubature formula of the same index with $n'$
nodes on ${\bf S}(m,\mh)$ where
\begin{equation}
\label{eq:scsedie}
  n'=
\left\{\begin{array}{ll}
   N_{\mr}\sko{4,s-1}(s+1)n, & s\equiv 1\pmod{2} \\
   N_{\mr}\sko{4,s}(sn+1), & s\equiv 0\pmod{2}. \
  \end{array}\right.
\end{equation}
\Eth
\begin{proof}
We have
\be
\label{eq:info}
\nu_{\mh}(2s)=N_{\mr}(4,2\bra{s/2})=
\left\{\begin{array}{ll}
   N_{\mr}\sko{4,s-1}, & s\equiv 1\pmod{2} \\
   N_{\mr}\sko{4,s}, & s\equiv 0\pmod{2}. \
  \end{array}\right.
\ee
\end{proof}

\Bco
\label{cor:mth}
The inequality
\begin{equation}
\label{eq:scsedip}
  N_{\mh}(m,2s)\leq
\left\{\begin{array}{ll}
    N_{\mr}\sko{4,s-1}(s+1)N_{\mh}(m-1,2s), & s\equiv 1\pmod{2} \\
    N_{\mr}\sko{4,s}(sN_{\mh}(m-1,2s)+1), & s\equiv 0\pmod{2}.
  \end{array}\right.
\end{equation}
holds.
\Eco
The exact values of $N_{\mr}\sko{4,2\bra{s/2}}$ are unknown, except
for the cases $s=2$ and $s=4$ when $N_{\mr}\sko{4,2} = 4$ and
$N_{\mr}\sko{4,4} = 11$, respectively. However, we can use  the upper bound \eref{eq:nuovo}.
\Bth
\label{th:scsese1}
Any projective cubature formula of index $2s$ with $n$ nodes on $\sferh{m-1}$ determines a
projective cubature formula of the same index with $n'$ nodes on $\sferh{m}$ where
\begin{equation}
\label{eq:hrec}
16n'\leq
\left\{\begin{array}{ll}
(s+1)^4 n, & s\equiv 3\pmod{4} \\&\\
(s+3)(s+1)^3n, & s\equiv 1\pmod{4} \\&\\
(s+2)^3(sn+1), & s\equiv 2\pmod{4} \\&\\
(s+4)(s+2)^2(sn+1), & s\equiv 0\pmod{4}. \
\end{array}\right.
\end{equation}
\Eth
\begin{proof}
If $s\equiv 0\pmod{4}$ then $s/2\equiv 0\pmod{2}$ and \eref{eq:nuovo} yields
\be
\label{eq:nr4s0}
N_{\mr}(4,s)\leq \frac{(s/2+2)(s/2+1)^2}{2}=  \frac{(s+4)(s+2)^2}{16}.
\ee
Now let $s\equiv 1\pmod{4}$. Then $s-1\equiv 0\pmod{2}$ and \eref{eq:nr4s0}
turns into
\be
\label{eq:nr4s1}
N_{\mr}(4,s-1)\leq \frac{(s+3)(s+1)^2}{16}.
\ee
Similarly, if $s\equiv 2\pmod{4}$ then $s/2\equiv 1\pmod{2}$, hence
\be
\label{eq:nr4s2}
N_{\mr}(4,s)\leq\frac{(s/2+1)^3}{2} =\frac{(s+2)^3}{16}
\ee
by \eref{eq:nuovo}. Finally,  if $s\equiv 3\pmod{4}$ then $s-1\equiv 2\pmod{2}$, hence
\be
\label{eq:nr4s3}
N_{\mr}(4,s-1)\leq \frac{(s+1)^3}{16}.
\ee
by \eref{eq:nr4s2}. It remains to substitute the inequalities \eref{eq:nr4s0}-\eref{eq:nr4s3}
into \eref{eq:scsedie}.
\end{proof}
\Bco
\label{cor:nhin}
The inequality
\begin{equation}
\label{eq:nhmp}
16N_{\mh}(m,2s)\leq
\left\{\begin{array}{ll}
(s+1)^4 N_{\mh}(m-1,2s), & s\equiv 3\pmod{4} \\&\\
(s+3)(s+1)^3N_{\mh}(m-1,2s), & s\equiv 1\pmod{4} \\&\\
(s+2)^3(sN_{\mh}(m-1,2s)+1), & s\equiv 2\pmod{4} \\&\\
(s+4)(s+2)^2(sN_{\mh}(m-1,2s)+1), & s\equiv 0\pmod{4} \
\end{array}\right.
\end{equation}
holds.
\Eco
In particular,
\begin{equation}
\label{eq:nh2p}
16N_{\mh}(2,2s)\leq
\left\{\begin{array}{ll}
(s+1)^4, & s\equiv 3\pmod{4} \\&\\
(s+3)(s+1)^3, & s\equiv 1\pmod{4} \\&\\
(s+2)^3(s+1), & s\equiv 2\pmod{4} \\&\\
(s+4)(s+2)^2(s+1), & s\equiv 0\pmod{4} \
\end{array}\right.
\end{equation}
 Asymptotically,
\begin{equation}
\label{eq:ash}
  N_{\mh}(2,2s)\lesssim\frac{1}{16}s^4,\quad s\ra\infty.
\end{equation}

{\em The upper bounds \eref{eq:nh2p} are effective, even asymptotically}.

\section{The numerical results}
\label{sec:num}
 \setcounter{equation}{0}

In this section we present the tables of effective numerical upper bounds
for $N_{\pole}(m,p)$ obtained by the recursion combined with other tools, if any.
We do not include those of bounds which are
worse than known once.
Of course, it would be meaningless to tabulate the general inequalities like \eref{eq:ubc2}.
However, some their numerical consequences are presented for the reader convenience.

The tables are organized as follows.
The Table 1 contains those known equalities of form $n=N_{\pole}(m,p)$ which are
used as the starting data (the {\em input}) for the recursion. The equalities are enumerated as e1, e2,...
Similarly, in the Table 2 the input inequalities $N_{\pole}(m,p)\leq n$
are enumerated as i1, i2,... The Tables 3, 4, 5 contain the resulting upper bounds for
$\pole=\mr$, $\mc$, $\mh$, respectively, enumerated as r0, r1,...
within each table. In every of these tables the enumeration is established
in ascending order of $m$.
The effectiveness of all results is demonstrated by including of the
corresponding GUB \eref{unotto} into the tables.
Several cases of known upper bounds which are weaker than ours
are mentioned after the tables.

All input data are provided with the bibliographic references. For all results
we refer to the input data and to the general facts from Section \ref{sec:appl}
and, sometimes, from Section \ref{sec:intro}. Also, there are some cross-references
between the Tables of results.

Let us remember three equivalent interpretations of the inequality
$N_{\pole}(m,p)\leq n$.

a) {\em There exists a projective cubature formula of index $p$ with
  $n$ nodes on the sphere $\sferk{m}$.}

b) {\em There exists an isometric embedding $l_{2;\pole}^m\ra l_{p;\pole}^n$.}

c) {\em There exists an $m$-dimensional Euclidean subspace in the
  normed space $l_{p;\pole}^n$.}

Thus, each row of our tables is an existence theorem which can
be formulated in any of equivalent form a), b), c) with some concrete
values $m,p,n$.

\begin{longtable}{|c|c|c|c|c|c|}
\caption{Input equalities  $n=N_{\pole}(m,p)$}\\\hline
&$\pole$&$m$&$p$&$n$  & \textbf{References}\\\hline
\endfirsthead\multicolumn{6}{c}
 { \textit{Continued from previous page}} \\\hline &$m$&$p$&$n$   &\textbf{ GUB}
& \textbf{References}
          \\\hline\endhead\hline
           \multicolumn{6}{r}{\textit{Continued on next page}} \\\endfoot\hline\endlastfoot
e1& $\mr$& 4& 4 & 11& \cite{R}\\\hline%e3
e2& $\mr$& 23 & 6 & 2~300  & \cite{DGS} \\\hline%e4
e3& $\mr$& 24& 10 & 98~280 & \cite{DGS}\\\hline%e5
e4& $\mc$ &2 &8 &10  & \cite{LSe} \\\hline%e9
e5& $\mc$& 2& 10  & 12  & \cite{LSe} \\\hline%e10
e6&$\mc$ & 4 & 6 & 40 &\cite{K} \\\hline%e7
e7& $\mc$& 6 & 6  & 126  & \cite{K}\\\hline%e8
e8&$\mh$&5&6&165&\cite{hoguni}  \\\hline%e12
\end{longtable}
\begin{longtable}{|c|c|c|c|c|c|}
\caption{Input inequalities $N_{\pole}(m,p)\leq n$}\\\hline
&$\pole$&$m$&$p$&$n$  & \textbf{References}\\\hline
\endfirsthead\multicolumn{6}{c}
 { \textit{Continued from previous page}} \\\hline &$m$&$p$&$n$   &\textbf{ GUB}
& \textbf{References}
          \\\hline\endhead\hline
           \multicolumn{6}{r}{\textit{Continued on next page}} \\\endfoot\hline\endlastfoot
i1& $\mr$ &4  &6 &23 &\cite{HadSlo94}\\\hline%i4
i2& $\mr$ &4&10&60&\cite{S}\\\hline%i15
%i3& $\mr$&4&14&264&\cite{HPV}\\\hline%i23
i3& $\mr$&4&18&360&\cite{S}\\\hline%i27
i4& $\mr$ &8&10&1200&\cite{HPV}\\\hline%i16
i5& $\mr$&8&12&12~120&\cite{HPV}\\\hline%i21
i6& $\mr$&8&14&13~200&\cite{HPV}\\\hline%i24
i7& $\mr$ &12 &6   &756& \cite{HPV}\\\hline%i5
i8& $\mr$ &12&8&4~032 &\cite{HPV}\\\hline%i11
i9& $\mr$ &12&10&25~200&\cite{HPV}\\\hline%i17
i10& $\mr$ &14 &4 &378  &\cite{BV}\\\hline%i1
i11& $\mr$ &14&6&756   &\cite{HPV}\\\hline%i6
i12& $\mr$ &14&8&44~982&\cite{HPV}\\\hline%i12
i13& $\mr$ &14&10&53~718&\cite{HPV}\\\hline%i18
i14& $\mr$ & 16 &6 &2~160  &\cite{hoguni}\\\hline%i7
i15& $\mr$ &16&8&32~780&\cite{HPV}\\\hline%i13
i16& $\mr$ &16&10&65~760&\cite{HPV}\\\hline%i19
i17& $\mr$&16&12&2~277~600&\cite{HPV}\\\hline%i22
i18& $\mr$ &20  &4& 1~980 &\cite{BV}\\\hline%i2
i19& $\mr$ &20&8&172~920&\cite{HPV}\\\hline%i14
i20& $\mr$ &20&10&2~263~800&\cite{HPV}\\\hline%i20
i21& $\mr$&24&14&8~484~840&\cite{HPV}\\\hline%i25
i22& $\mr$&24&16&207~501~840&\cite{HPV}\\\hline%i26
i23&$\mr$&24&18&2~522~192~400&\cite{HPV}\\\hline%i28
i24& $\mr$ &26&4  &  10~920 &\cite{BV}\\\hline%i3
i25& $\mr$ &26 &6  &21~840 &\cite{BV}\\\hline%i8
i26& $\mr$ &32 &6  &73~440 &\cite{BV}\\\hline%i9
i27& $\mr$ &36 &6 &164~160   &\cite{BV}\\\hline%i10
%i29& $\mc$&2&18&60&\cite{LSe}\\\hline%i32
i28&   $\mc$&9&4&90&\cite{hoguni}\\\hline%i29 i30
i29& $\mc$&12&10&32~760&\cite{hoguni}\\\hline%i31 i31
i30& $\mc$ &28&4&4~060&\cite{hoguni}\\\hline%i30 i32
i31&$\mh$&3&10&315&\cite{hoguni}\\\hline%i33 i33
\end{longtable}
\medskip

\begin{longtable}{|c|c|c|c|c|c|}
\caption{Results $N_{\mr}(m,p)\leq n$}\\\hline
~&$m$&$p$&$n$   &\textbf{ GUB}  & \textbf{References}\\\hline
\endfirsthead\multicolumn{6}{c}
 { \textit{Continued from previous page}} \\\hline &$m$&$p$&$n$   &\textbf{ GUB}
 & \textbf{References}
          \\\hline\endhead\hline
           \multicolumn{6}{r}{\textit{Continued on next page}} \\\endfoot\hline\endlastfoot

r0&4&14& 256&679&\eref{eq:nuovo}\\\hline
r1&4&16& 360&968&\eref{eq:redu}, i3\\\hline%r60 r2
r2&5&10&360&1000&\eref{eq:scociu}, i2\\\hline%r38 r4
r3&5&14&2~048&3~059&\eref{eq:uovo}\\\hline%r54 r5
r4& 5&16& 2881&4844&\eref{eq:scociu}, r1\\\hline%r61 r6
r5&5&18&3~600& 7~314&\eref{eq:scociu}, i3\\\hline%r65 r7
r6&6&8&615 &1286&\eref{eq:koly}, \eref{eq:scocit}, r1($\mc$) \\\hline%24 r8
r7&6&10&1296&3002& \eref{eq:koly}, \eref{eq:scocit}, r2($\mc$)\\\hline%r39 r9
r8&8&8&1~200& 6~434& \eref{eq:redu}, i4 \\\hline%r25 r11
r9&9&8&4~801&12~869&\eref{eq:scociu}, r8 \\\hline%r26 r12
r10&9&10& 7~200  &43~757&\eref{eq:scociu}, i4\\\hline%r40 r13
r11&9&12&72~721& 125~969&\eref{eq:scociu}, i5\\\hline%r52 r14
r12&9&14&105~600& 319~769& \eref{eq:scociu}, i6\\\hline%r55 r15
r13&10&6&1~280& 5~004 & \eref{eq:koly}, r4($\mc$)\\\hline%r5 r16
r14&10&8&19~205&24~309&\eref{eq:scociu}, r9 \\\hline%r27 r17
r15&10&10&43~200 &92~377&\eref{eq:scociu}, r10\\\hline%r41 r18
r16&11&6&5~120& 8~007 &\eref{eq:scociu}, r13\\\hline%r6 r19
r17&13&6&3~024& 18~563& \eref{eq:scociu}, i7\\\hline%r7 r20
r18&13& 8&16~129&125~969&\eref{eq:scociu}, i8\\\hline%r28 r21
r19&13&10&151~200& 646~645&\eref{eq:scociu}, i9\\\hline%r42 r22
r20&15&4&757   & 3~059& \eref{eq:scociu}, i10\\\hline%r1 r23
r21&15&6&3~024& 38~759&\eref{eq:scociu}, i11\\\hline%r8 r24
r22&15& 8&179~929&319~769&\eref{eq:scociu}, i12\\\hline%r29 r25
r23&15&10&322~308&1~961~255& \eref{eq:scociu}, i13\\\hline%r43 r26
r24&17&6&8~640& 74~612&\eref{eq:scociu}, i14\\\hline%r9 r27
r25&17& 8&131~121& 735~470&\eref{eq:scociu}, i15\\\hline%r30 r28
r26&17&10&394~560&5~311~734&\eref{eq:scociu}, i16\\\hline%r44 r29
r27&17&12&13~665~601&30~421~754& \eref{eq:scociu}, i17\\\hline%r53 r30
r28&18&6&34~560& 100~946&\eref{eq:scociu}, r24\\\hline%r10 r31
r29&18& 8&524~485&1~081~574&\eref{eq:scociu}, r25\\\hline%r31 r32
r30&18&10& 2~367~360&9~436~284&\eref{eq:scociu}, r26\\\hline%r45 r33
r31&20&6&3795&177~099&\eref{eq:koly}, i1, e8\\\hline%r11 r34
r32&21&4&3~961 &10~625&\eref{eq:scociu}, i18\\\hline%r2 r35
r33&21&6&15~180&230~229& \eref{eq:scociu}, r31\\\hline%r12 r36
r34&21& 8&691~681& 3~108~104 & \eref{eq:scociu}, i19\\\hline%r32 r37
r35&21&10&13~582~800&30~045~014&\eref{eq:scociu}, i20\\\hline%r46 r38
r36&22&4&7~923 &12~649&\eref{eq:scociu}, r32\\\hline%r3 r39
r37&22 &6&60~721   &296~009& \eref{eq:scociu}, r33\\\hline%r13 r40
r38&22& 8&2~766~725&4~292~144 &\eref{eq:scociu}, r34\\\hline%r33 r41
r39& 24& 4& 9~200&  17~549& \eref{eq:redu}, r40\\\hline%
r40& 24& 6& 9~200&  475~019& \eref{eq:scociu}, e2\\\hline%r14 r42
r41&24 & 8& 98~280&7~888~724 &\eref{eq:redu}, e3\\\hline%r34 r43
r42& 25& 6& 36~800& 593~774&\eref{eq:scociu}, r40\\\hline%r15 r44
r43& 25& 8& 393~121& 10~518~299&\eref{eq:scociu}, r41\\\hline%r35 r45
r44&25&10&  589~680&  131~128~139&\eref{eq:scociu}, e3 \\\hline%r47 r46
r45& 25& 12& 67~878~720&1~251~677~699& \eref{eq:redu}, r46\\\hline%
r46&25&14&67~878~720&9~669~554~099&\eref{eq:scociu}, i21\\\hline%r56 r47
r47&25&16&1~660~014~721&62~852~101~649& \eref{eq:scociu}, i22\\\hline%r62 r48
r48&25&18&25~221~924~000&353~697~121~049&\eref{eq:scociu}, i23\\\hline%r67 r49
r49& 26&8& 1~572~485& 13~884~155&\eref{eq:scociu}, r43\\\hline%r36 r50
r50&26&10& 3~538~080 & 183~579~395 & \eref{eq:scociu}, r44\\\hline%r48 r51
r51&26&12&543~029~760&1~852~482~995&\eref{eq:redu}, r52\\\hline%
r52&26&14&543~029~760&15~084~504~395&\eref{eq:scociu}, r46\\\hline%r57 r52
r53&26&16&13~280~117~769&103~077~446~705&\eref{eq:scociu}, r47\\\hline%r63 r53
r54&26&18&252~219~240~000&608~359~048~205&\eref{eq:scociu}, r48\\\hline%r68 r54
r55&27&4&21~841&27~404&\eref{eq:scociu}, i24\\\hline%r4 r55
r56& 27& 6&87~360&906~191&\eref{eq:scociu}, i25\\\hline%r16 r56
r57& 27&8&6~289~941&18~156~203& \eref{eq:scociu}, r49\\\hline%r37 r57
r58&27&10& 21~228~480& 254~186~855&\eref{eq:scociu}, r50\\\hline%r49 r58
r59&27&14&4~344~238~080&23~206~929~839&\eref{eq:scociu}, r52\\\hline%r58 r59
r60&27&16&106~240~942~153&166~509~721~601&\eref{eq:scociu}, r53\\\hline%r64 r60
r61&28& 6&349~440&1~107~567&\eref{eq:scociu}, r56\\\hline%r17 r61
r62&28&10& 38~918~880& 348~330~135&\eref{eq:koly}, i2, r7($\mh$)\\\hline%r50 r62
r63&28&14&34~753~904~640&35~240~152~719& \eref{eq:scociu}, r59\\\hline%r59 r63
r64&29&10&239~513~280&472~733~755&\eref{eq:scociu}, r62\\\hline%r51 r64
r65&33& 6& 293~760&2~760~680& \eref{eq:scociu}, i26\\\hline%r18 r65
r66&34& 6&1~175~040&3~262~622&\eref{eq:scociu}, r65\\\hline%r19 r66
r67&37& 6&656~640&5~245~785&\eref{eq:scociu}, i27\\\hline%r20 r67
r68&38&6&2~626~560&6~096~453&\eref{eq:scociu}, r67 \\%r21 r68
\end{longtable}
\medskip

\begin{longtable}{|c|c|c|c|c|c|}
\caption{Results $N_{\mc}(m,p)\leq n$}\\\hline
~&$m$&$p$&$n$   &\textbf{ GUB}  & \textbf{References}\\\hline
\endfirsthead\multicolumn{6}{c}
 { \textit{Continued from previous page}} \\\hline
&$m$&$p$&$n$   &\textbf{ GUB}  & \textbf{Source}
          \\\hline\endhead\hline
           \multicolumn{6}{r}{\textit{Continued on next page}} \\\endfoot\hline\endlastfoot
r0&2&18&50&99&\eref{eq:ubc2} \\\hline
r1&3&8&123&224&\eref{eq:scowc}, e4\\\hline%r15
r2&3&10& 216& 440&\eref{eq:scowc}, e5\\\hline%r19
r3& 3&18&2~500&3~024& \eref{eq:scowc}, r0\\\hline%r22
r4&5 &6&320 &1~224 & \eref{eq:scowc}, e6\\\hline%r6
r5& 7   &6&  1~008     & 7~055&\eref{eq:scowc}, e7\\\hline%r7
r6&8& 6&2~160&14~399&\eref{eq:koly}, i14\\\hline%r8
r7& 9& 6&  17~280& 27~224&\eref{eq:scowc}, r6\\\hline%r9
r8&10&4& 362   &3~024&\eref{eq:scowc}, i28\\\hline%r1
r9& 11&4 & 1450 & 4~355 &\eref{eq:scowc}, r8\\\hline%r2
r10 &12&4&5802&6~083  &\eref{eq:scowc}, r9\\\hline%r3
r11 &12& 6& 32~760&132~495& \eref{eq:redu}, r12\\\hline%r10
r12 &12&8&32~760& 1~863~224& \eref{eq:redu}, i29\\\hline%r16
r13& 13& 6&73~600& 207~024&\eref{eq:scsedi}, \eref{eq:simp}, r40($\mr$)\\\hline%r11
r14&13&8& 393~123&3~312~399& \eref{eq:scowc}, r12\\\hline%r17
r15&13&10&589~680& 38~291~343& \eref{eq:scowc}, i29\\\hline%r20
r16&14& 6&174~720&313~599&\eref{eq:scsedi}, \eref{eq:simp}, i25\\\hline%r12
r17&14&8&4~717~479&5~664~399&\eref{eq:scowc}, r14\\\hline%r18
r18&14&10&63~685~440& 73~410~623& \eref{eq:scsedi}, \eref{eq:scocit}, r50($\mr$)\\\hline%r21
r19&17&6&587~520&938~960&\eref{eq:scsedi}, \eref{eq:simp}, i26\\\hline%r13
r20&19&6&1~313~280&1~768~899&\eref{eq:scsedi}, \eref{eq:simp}, i27\\\hline%r14
r21&29 &4&  16~242  & 189~224&\eref{eq:scowc}, i30\\\hline%r4
r22 & 30&4&64~970&216~224  & \eref{eq:scowc}, r21\\%r5
\end{longtable}
\begin{longtable}{|c|c|c|c|c|c|}
\caption{Results $N_{\mh}(m,p)\leq n$}\\\hline
~&$m$&$p$&$n$   &\textbf{ GUB}  & \textbf{References}\\\hline
\endfirsthead\multicolumn{6}{c}
 { \textit{Continued from previous page}} \\\hline &$m$&$p$&$n$   &\textbf{ GUB}  & \textbf{Source}
          \\\hline\endhead\hline
           \multicolumn{6}{r}{\textit{Continued on next page}} \\\endfoot\hline\endlastfoot
r1&4&10&20~790&60~983&\eref{eq:scsedipart},  e1, i31\\\hline%r6
r2&5&4&165&  824&\eref{eq:redu}, e8\\  \hline%r1
r3& 6&4&1~324&1~715& \eref{eq:scsedipart}, \eref{eq:simp}, r2 \\  \hline%r2
r4&6&6&2~640&26~025&\eref{eq:scsedipart},  \eref{eq:simp}, e8\\  \hline%r4
r5&7&6&42~240&63~699&\eref{eq:scsedipart}, \eref{eq:simp}, r4\\\hline%r5
r6&7&10 & 6~486~480&8~836~463&  \eref{eq:scsedi},  i2, e3\\%r7
\end{longtable}

In conclusion let us note that

\begin{itemize}
\item r0($\mr$) improves $N_{\mr}(4,14)\leq 264$ from \cite{HPV},
\item r31($\mr$) improves $N_{\mr}(20,6)\leq 3~960$ from \cite{BV},
\item r39($\mr$) improves $N_{\mr}(24,4)\leq 13~104$ from \cite{BV},
\item r40($\mr$) improves $N_{\mr}(24,6)\leq 26~213$ from \cite{BV},
\item r0($\mc$) improves $N_C(2,18)\leq 60$ from \cite{LSe}.
\end{itemize}

\end{document}